\documentclass[reqno]{amsart}

\usepackage{amsmath,amssymb,amsthm}
\usepackage{graphicx,tikz}
\newtheorem{theorem}{Theorem}

\newtheorem*{proposition}{Proposition}

\newtheorem*{definition}{Definition}

\theoremstyle{remark}

\begin{document}

\title[]{Buffon Discrepancy and the Steinhaus Longimeter}
 \thanks{}

\author[]{Stefan Steinerberger}
\address{Department of Mathematics and Department of Applied Mathematics, University of Washington, Seattle, WA 98195, USA}
\email{steinerb@uw.edu}

\begin{abstract} Let $\Omega \subset \mathbb{R}^2$ be a convex set. We study the problem of distributing a one-dimensional set $S$ with total length $L$ so that for any line $\ell$ in $\mathbb{R}^2$ the number of intersections $\#(\ell \cap S)$ is proportional to the length $\mathcal{H}^1(\ell \cap \Omega)$ as much as possible; we use the term Buffon discrepancy for the largest error. A construction of Steinhaus can be generalized to prove the existence of sets with Buffon discrepancy $\lesssim L^{1/3}$. We also show that the unit disk $\mathbb{D}$ admits a set with uniformly bounded Buffon discrepancy as $L \rightarrow \infty$.

\end{abstract}

\maketitle

\section{Introduction}
\subsection{The problem}
The goal of this paper is to present an interesting problem: we are given a bounded, convex domain $\Omega \subset \mathbb{R}^2$.  The goal is to find, for any given $L > 0$, a one-dimensional set $S \subset \Omega$ with length $L$ so that the following is true: for any line $\ell$ in $\mathbb{R}^2$ the number of intersections  $\# (\ell \cap S)$ is proportional (as much as possible) to the length of the line inside $\Omega$, that is $\mathcal{H}^1(\ell \cap \Omega)$. The obvious questions are: how can this be made precise, how well can it be done and what do extremal sets $S$ look like? Computational experiments (see Fig. \ref{fig:1}) suggest that the answer to these questions might be quite interesting; the question is also of obvious interest in higher dimensions; this paper is solely focused on the case of two dimensions.

\begin{center}
\begin{figure}[h!]
\begin{tikzpicture}
\node at (0,0) {\includegraphics[width=0.4\textwidth]{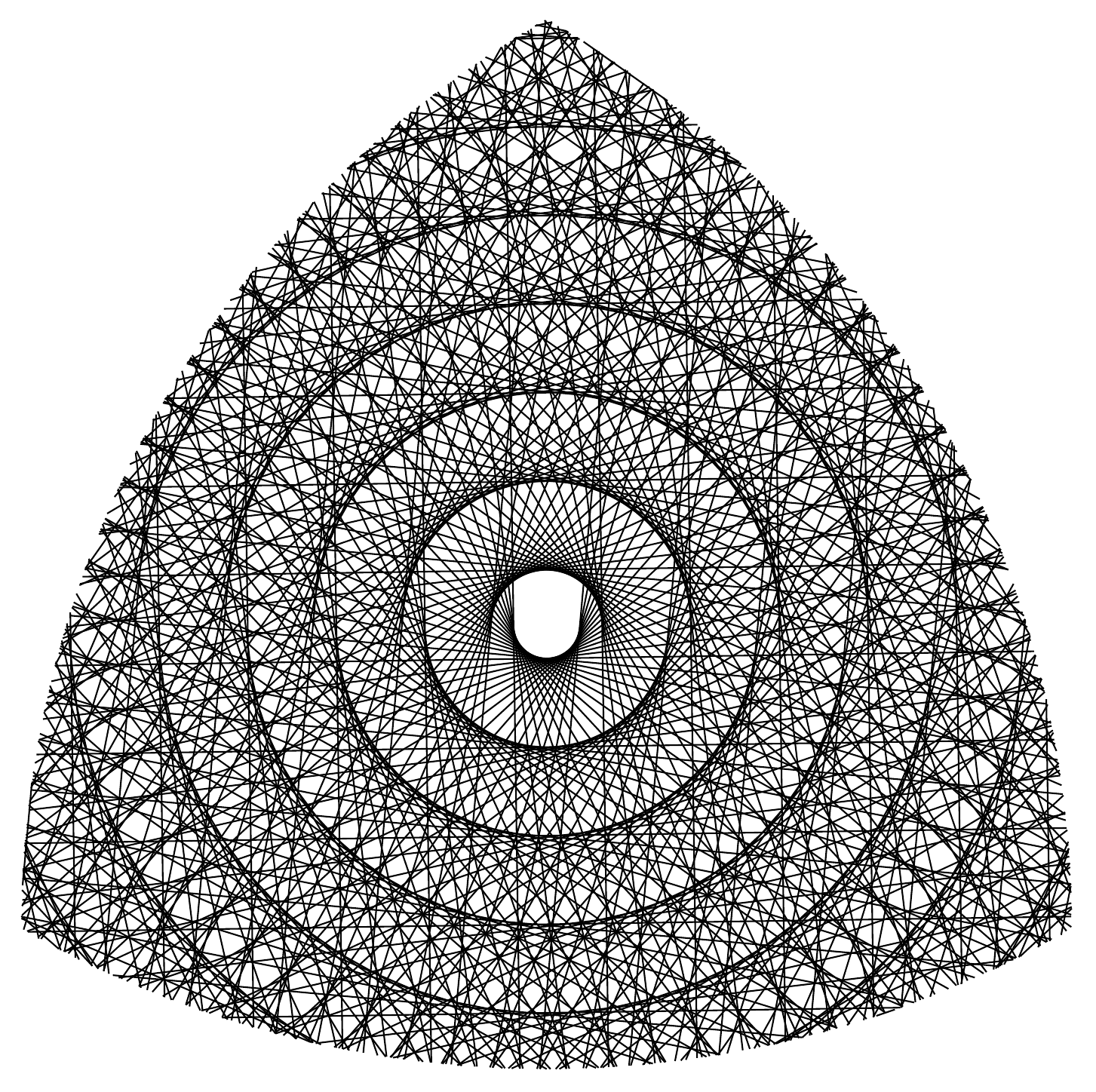}};
\node at (-6.5,0) {\includegraphics[width=0.4\textwidth]{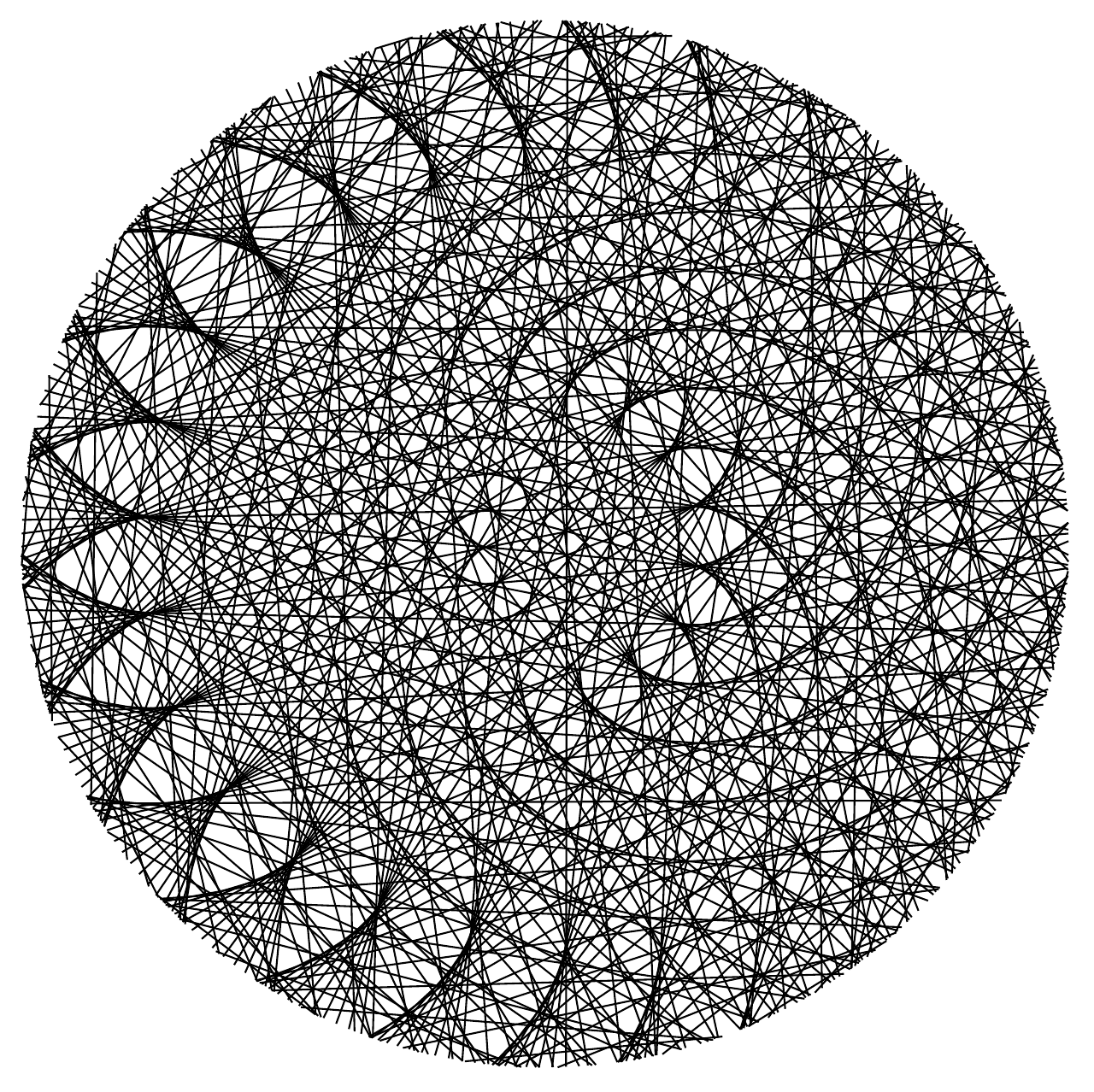}};
\end{tikzpicture}
\caption{Sets of lines of length $L =500$ inside the unit disk (left) and the Reuleaux triangle (right) with the property that every line through the domain intersects them a number of times roughly proportional to the length (with a fairly small error).}
\label{fig:1}
\end{figure}
\end{center}

\subsection{Buffon discrepancy} We start by motivating the quantity of interest. Asking for the number of intersections to be \textit{proportional} introduces two variables: (a) the proportionality factor and (b) the worst case error assuming that proportionality factor. It is clear that these two numbers (denoted $c$ and $X$ in the subsequent Proposition) are highly connected and maybe one does not wish to deal with both at the same time. Our first observation is that there exists a somewhat canonical proportionality factor $c$ suggested by the Cauchy-Crofton formula.  

\begin{proposition}[Cauchy-Crofton scaling]
Let $\Omega \subset \mathbb{R}^2$ be bounded, let $S$ be a set with length $\mathcal{H}^1(S) = L$ having the property that for some $c>0$ and all lines $\ell$ in $\mathbb{R}^2$
$$ \left| \# (\ell \cap S) - c\cdot \mathcal{H}^1( \ell \cap \Omega) \right| \leq X,$$
then
$$ \left| c - \frac{2}{\pi}\frac{ L}{\emph{area}(\Omega)} \right| \leq \frac{2 \cdot \emph{diam}(\Omega) }{\emph{area}(\Omega)} X.$$
\end{proposition}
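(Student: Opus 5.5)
We integrate the assumed inequality over the space of all lines that meet $\Omega$, using the kinematic measure, and compare the two sides with Cauchy--Crofton.

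Parametrize lines by $\ell_{\theta,p}=\{x\in\mathbb{R}^2: \langle x,(\cos\theta,\sin\theta)\rangle = p\}$ with $\theta\in[0,\pi)$, $p\in\mathbb{R}$, and measure $d\theta\,dp$. Three standard identities are needed.
\begin{enumerate}
\item The Cauchy--Crofton formula for a rectifiable set $S$ of length $L$:
$$\int_0^\pi\int_{\mathbb{R}} \#(\ell_{\theta,p}\cap S)\,dp\,d\theta = 2L.$$
\item Fubini: for each fixed $\theta$, $\int_{\mathbb{R}} \mathcal{H}^1(\ell_{\theta,p}\cap\Omega)\,dp = \mathrm{area}(\Omega)$, so
$$\int_0^\pi\int_{\mathbb{R}} \mathcal{H}^1(\ell_{\theta,p}\cap\Omega)\,dp\,d\theta = \pi\,\mathrm{area}(\Omega).$$
\item For each $\theta$, the set of $p$ with $\ell_{\theta,p}\cap\Omega\neq\emptyset$ lies in an interval of length $w_\theta(\Omega)\le \mathrm{diam}(\Omega)$, the width of $\Omega$ in direction $\theta$. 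Hence the lines meeting $\Omega$ have total measure at most $\pi\,\mathrm{diam}(\Omega)$.
\end{enumerate}

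The one subtlety is that $S$ is not assumed to lie in $\Omega$; the statement only says $\Omega$ is bounded. This is handled by the hypothesis itself. For a line $\ell$ missing $\Omega$, the inequality gives $\#(\ell\cap S)\le X$. I expect this to be the main obstacle, and I would resolve it by working in the intended setting $S\subset\Omega$ (as in the problem statement), or equivalently by integrating only over lines meeting $\overline{\Omega}$, where both counts live. In the case $S\subset\Omega$, lines missing $\Omega$ contribute zero to both integrals. The same holds if $S\subset\overline{\Omega}$, since lines meeting only $\partial\Omega$ form a null set.

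Integrate $|\#(\ell\cap S) - c\,\mathcal{H}^1(\ell\cap\Omega)|\le X$ over the set $A$ of lines meeting $\Omega$. Moving the absolute value outside the integral and using (1) and (2) gives
$$|2L - c\pi\,\mathrm{area}(\Omega)| \le X\,|A| \le \pi\,\mathrm{diam}(\Omega)\,X.$$
Dividing by $\pi\,\mathrm{area}(\Omega)$ yields
$$\left|c-\frac{2}{\pi}\frac{L}{\mathrm{area}(\Omega)}\right|\le \frac{\mathrm{diam}(\Omega)}{\mathrm{area}(\Omega)}\,X.$$
This is stronger than the stated bound by a factor of $2$.

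The factor $2$ in the statement leaves room for a cruder estimate of $|A|$, for instance the bound $|A|\le 2\pi\,\mathrm{diam}(\Omega)$ obtained by taking $p$ in an interval of length $2\,\mathrm{diam}(\Omega)$ centred at a point of $\Omega$. So the claimed inequality follows in any case.
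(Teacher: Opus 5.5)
Your proof is correct and follows the paper's route. You integrate the hypothesis over the lines meeting $\Omega$, evaluate the two integrals by Cauchy--Crofton and Fubini, and bound the measure of the integration domain. Your assumption $S\subset\Omega$ is the one the paper's proof makes implicitly, and it is genuinely needed: otherwise a large circle placed far from $\Omega$ would satisfy the hypothesis and violate the conclusion.

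The only difference is in bounding that measure. You bound the lines meeting $\Omega$ direction by direction by the width $w_\theta(\Omega)\le\mathrm{diam}(\Omega)$. The paper instead uses all lines meeting an enclosing ball of radius $R\le\mathrm{diam}(\Omega)$; its parametrization by oriented lines, $\theta\in[0,2\pi)$, only rescales both sides. That is why you obtain the constant $\mathrm{diam}(\Omega)/\mathrm{area}(\Omega)$, a factor $2$ better than the stated one.
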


Since we are mainly interested in the asymptotic regime $L \gg X$, the Proposition suggests to \textit{prescribe} the proportionality factor as $2 L/(\pi \cdot \mbox{area}(\Omega))$. We use this as our starting point to define Buffon discrepancy. 

\begin{definition} Let $\Omega \subset \mathbb{R}^2$ be a bounded set in the plane and let $S \subset \Omega$ be a rectifiable set with length $L$. We define the \emph{Buffon discrepancy} of $S$ (with respect to $\Omega$) as
$$ \left\|  \# (\ell \cap S) - \frac{2}{\pi}\frac{L}{ \emph{area}(\Omega)} \mathcal{H}^1(\ell \cap \Omega) \right\|_{L^{\infty}(\mu)},$$
where $\mu$ is the kinematic measure and $\ell$ ranges over all lines.
\end{definition}
Note that if the set $S$ were to contain a line segment, then there exists a line $\ell$ such that $\# (\ell \cap S) = \infty$. One could replace line segments by slightly curved segments and take a limit but presumably the more natural way out is to exclude `exceptional lines of measure 0' and this is done by the use of the kinematic measure; recall the formal definition of the $L^{\infty}-$norm as
 $$ \|f\|_{L^{\infty}(\mu)} = \inf\left\{C \geq 0: |f(x)| \leq C \quad \mbox{for}~\mu~\mbox{almost every}~x \right\}$$
 which encapsulates the idea that individual lines as well as sets of lines with measure 0 do not matter.

\begin{center}
\begin{figure}[h!]
\begin{tikzpicture}
\node at (0,0) {\includegraphics[width=0.35\textwidth]{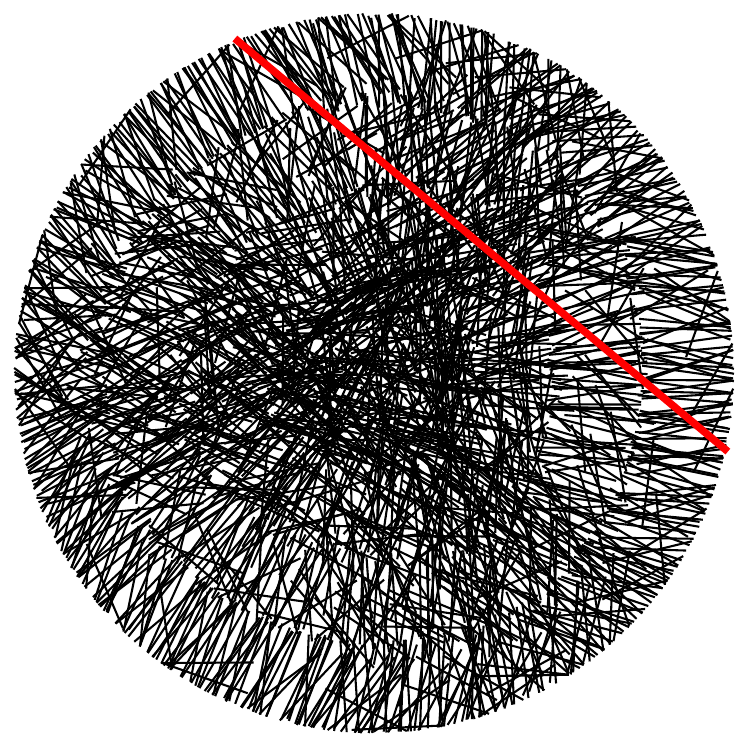}};
\node at (6,1) {2000 line segments $S$ with total length $L=500$};
\node at (6,0) {the red line $\ell$ intersects $\# (\ell \cap S) =160$};
\node at (6.1,-0.4) {segments and has length $\mathcal{H}^1(\ell) \sim 1.769$};
\node at (6, -1.5) {$ \Large \left| 160 - \frac{2L}{\pi^2} \cdot 1.769 \right| \sim 19.24$};
\end{tikzpicture}
\caption{A set with length $L=500$ in the unit disk and a line showing that the Buffon discrepancy of the set is $\geq 19.24$.}
\end{figure}
\end{center}

There exists a trivial lower bound: for fixed $\Omega$ as $L \rightarrow \infty$, we observe that the intersection $ \# (\ell \cap S)$ is always integer-valued while the length of all chords assume intermediate real values; in general, we always have a trivial bound
$$ \left\|  \# (\ell \cap S) - \frac{2}{\pi}\frac{L}{\mbox{area}(\Omega)} \mathcal{H}^1(\ell \cap \Omega) \right\|_{L^{\infty}(\mu)} \geq \frac{1}{2}.$$
As it turns out, this trivial lower bound is optimal in the case of the unit disk.

 \subsection{Unit Disk}
The unit disk $\mathbb{D}$ is a natural first example. We prove the existence of a very simple set, a union of suitably placed concentric circles, with uniformly bounded Buffon discrepancy.

\begin{theorem}
For any $L > 0$ there exists a set $S \subset \mathbb{D}$ with total length $L$ with
 $$ \max_{\ell} \left|  \# (\ell \cap S) - \frac{2L}{\pi^2} \mathcal{H}^1(\ell \cap \mathbb{D}) \right| \leq 100.$$
\end{theorem}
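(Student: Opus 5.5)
We take $S$ to be a union of concentric circles $\{|x| = r_k\}$ of radii $0<r_1<\dots<r_N\le 1$ chosen by a quantile rule, and compare the counting function with the integral it discretizes. A line at distance $h\in[0,1)$ from the origin has chord length $2\sqrt{1-h^2}$. It meets a circle of radius $r>h$ in exactly two points and misses circles with $r<h$; tangency has measure zero and is irrelevant for the $L^\infty(\mu)$ norm. So
$$ \#(\ell\cap S) = 2\,\#\{k : r_k > h\}. $$
We need this to be within $O(1)$ of $\frac{2L}{\pi^2}\cdot 2\sqrt{1-h^2}$, uniformly in $h$. The density that makes this exact is the one matching Abel-type inversion: we want a measure $\nu$ on $[0,1]$ with $2\nu((h,1]) = \frac{4L}{\pi^2}\sqrt{1-h^2}$, together with the length constraint $\int 2\pi r\, d\nu(r) = L$.

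\textbf{Step 1 (continuous profile).} Set $F(h)=\frac{2L}{\pi^2}\sqrt{1-h^2}$, which is decreasing from $F(0)=2L/\pi^2$ to $F(1)=0$. Let $\nu$ have tail $\nu((h,1])=F(h)$, so $d\nu = -F'(r)\,dr = \frac{2L}{\pi^2}\frac{r}{\sqrt{1-r^2}}\,dr$. The total length is
$$ 2\pi\int_0^1 r\,d\nu = \frac{4L}{\pi}\int_0^1 \frac{r^2}{\sqrt{1-r^2}}\,dr = \frac{4L}{\pi}\cdot\frac{\pi}{4} = L. $$
So the continuous profile is consistent with the Cauchy--Crofton normalization, as the Proposition predicts.

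\textbf{Step 2 (discretize).} Let $N=\lfloor F(0)\rfloor$ and define $r_k$ by $F(r_k)=k-\tfrac12$ for $k=1,\dots,N$. This gives $N$ radii in $(0,1)$, with the largest radius corresponding to $k=1$. Then for every $h$ we have $|\#\{k:r_k>h\}-F(h)|\le 1$, so the intersection count differs from $2F(h)$, the target $\frac{2L}{\pi^2}\mathcal H^1(\ell\cap\mathbb D)$, by at most $2$.

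The total length $L'=2\pi\sum_k r_k$ is a midpoint-rule discretization of $2\pi\int r\,d\nu$. Summation by parts gives $L' = 2\pi\int_0^1 \#\{k:r_k>h\}\,dh$, while $L = 2\pi\int_0^1 F(h)\,dh$. Hence $|L'-L|\le 2\pi$, and a finer midpoint estimate would do better, but $O(1)$ suffices.

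\textbf{Step 3 (fix the length exactly).} The statement requires total length exactly $L$. We correct the discrepancy $L-L'$, which is at most $2\pi$ in absolute value.
\begin{itemize}
\item If $L'<L$, we add a small extra piece of length $L-L'\le 2\pi$, for instance a few arcs or a short curve. A generic line meets it at most $O(1)$ times; a circular arc meets a line at most twice, and at most about four arcs are needed.
\item If $L'>L$, we replace $L$ by a slightly smaller target $L_0$ in Step 2 so that $L_0'\le L$. Changing $L$ by $O(1)$ changes the target count by $O(1)\cdot\frac{2}{\pi^2}\cdot 2 = O(1)$. We then top up as in the first case.
\end{itemize}
This affects the discrepancy only by an absolute constant. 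Summing the contributions (rounding error $2$, the $L$-shift $\lesssim 2$, the padding $\lesssim 8$) stays well below $100$.

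\textbf{Main obstacle.} The only delicate point is this bookkeeping: exact length $L$ while keeping uniform control, including small $L$, where $N$ may be $0$. For $L\le$ some absolute constant, the trivial bound suffices. Take $S$ to be any union of at most $L/(2\pi)+1$ circles or arcs. Then $\#(\ell\cap S)\le 2(L/2\pi+1)$ and the target is at most $4L/\pi^2$, both bounded. Everything else is the exact computation of Step 1.
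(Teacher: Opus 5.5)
Your proposal is correct and follows essentially the same route as the paper: concentric circles with radii obtained by inverting $F(h)=\frac{2L}{\pi^2}\sqrt{1-h^2}$ at (half-)integers, an integral comparison showing the total length is within $O(1)$ of $L$, and a bounded correction by extra circular arcs. Your shift-$L$-down-then-top-up bookkeeping is in fact cleaner than the paper's ``add or remove at most 20 circles of radius 1''. One small point: the theorem is stated with $\max_\ell$, not $L^\infty(\mu)$, so tangent lines cannot be discarded as measure zero. This costs only an extra $1$, since the radii are distinct and a line is tangent to at most one of them, and each added arc meets any line at most twice; the paper notes exactly this.
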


\begin{center}
\begin{figure}[h!]
\begin{tikzpicture}
\node at (0,0) {\includegraphics[width=0.4\textwidth]{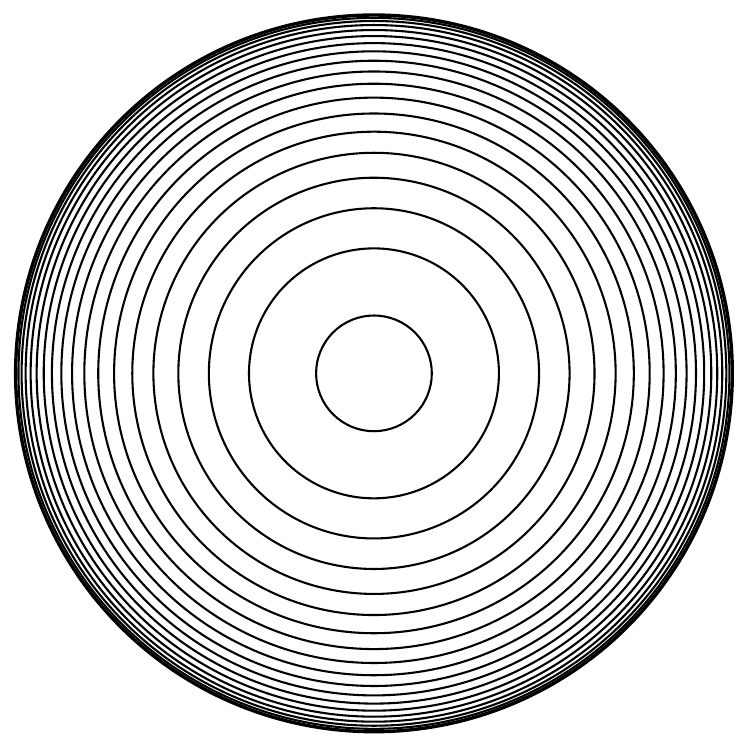}};
\node at (6,0) {\includegraphics[width=0.4\textwidth]{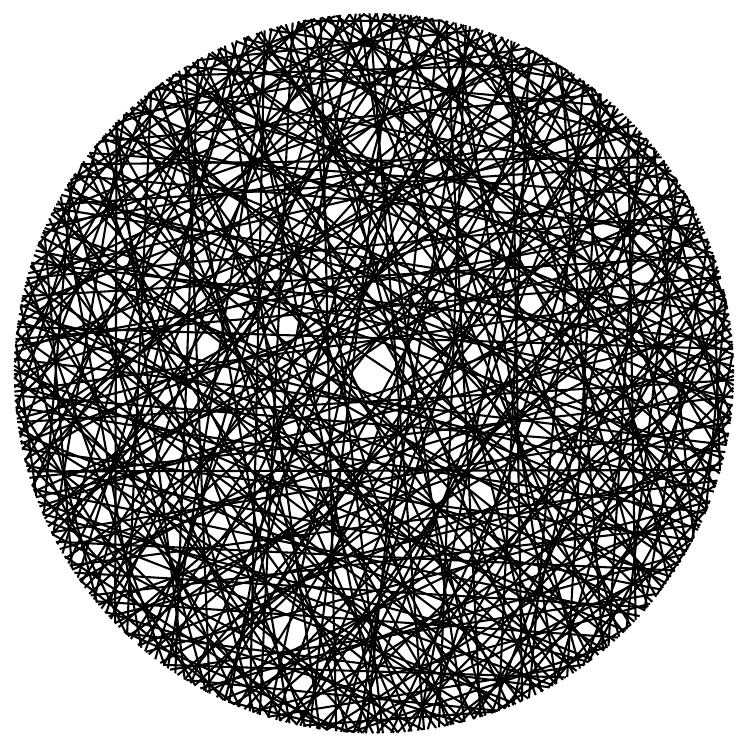}};
\end{tikzpicture}
\caption{Two sets of line segments with total length $L=500$ inside the unit disk with very small Buffon discrepancy.}
\label{fig:disk}
\end{figure}
\end{center}

It seems, see Fig. \ref{fig:1} and Fig. \ref{fig:disk}, that there are different types of sets inside the unit disk that have a small Buffon discrepancy. The union of concentric circles is arguably the simplest one; however, it might be interesting to see if it is possible to construct other examples of such sets.

\subsection{The Steinhaus longimeter} 
This type of problem appears to be very naturally related to a construction first proposed (and patented!) by Hugo 
Steinhaus \cite{stein00, stein0, steinhaus, stein2}. His 1930 paper \textit{On the practice of rectification and notion of length} \cite{stein00} begins by saying that

\begin{quote}
This note belongs to the area of applied mathematics. It proposes a method which allows the optical measurement of physically given curves. The term `optical' should highlight the distinction between our method and the mechanical [...] There are many conceivable cases where a mechanical device cannot be used. [...] for example if one wants to measure the length of a string-like curved object under the microscope [...] (Steinhaus, \cite{stein00})
\end{quote} 

One of the applications he has in mind is measuring the length of objects on a map as is also explained in his 1931 publication in \textit{Czasopismo Geograficzne} (Geographic Journal) \cite{stein0}.
Steinhaus first explains the usual proof of the Crofton formula and then notes that the proof allows for a discretization: taking 6 lines with an angle of 30 degrees between them and all their translates, Steinhaus notes that the number of intersections between this union of lines and any line segment is nearly proportional to the length of the line segment up to an error between $-2.26\%$ and $1.15\%$. More generally, let $S_{n, \varepsilon}$ denote $n$ lines going through the origin at an equal angle together with all translates by $\varepsilon$, 
$$ S_{n, \varepsilon} = \left\{ \begin{pmatrix} \cos\left( \pi k/n \right) \\ \sin\left(  \pi k/n\right) \end{pmatrix} t + \begin{pmatrix} - \sin\left( \pi k/n \right) \\ \cos\left(  \pi k/n\right) \end{pmatrix} s \varepsilon:  t \in \mathbb{R},~ s \in \mathbb{Z}, ~0 \leq k < n\right\}.$$
 The set $S_{1, \varepsilon}$ is the union of lines parallel to the $x-$axis while $S_{2, \varepsilon}$ is the familiar grid and $S_{3, \varepsilon}$ leads to a hexagonal structure. The standard (patented) Steinhaus longimeter construction corresponds to $S_{6, \varepsilon}$. 

\begin{center}
\begin{figure}[h!]
\begin{tikzpicture}
\node at (0,0) {\includegraphics[width=0.18\textwidth]{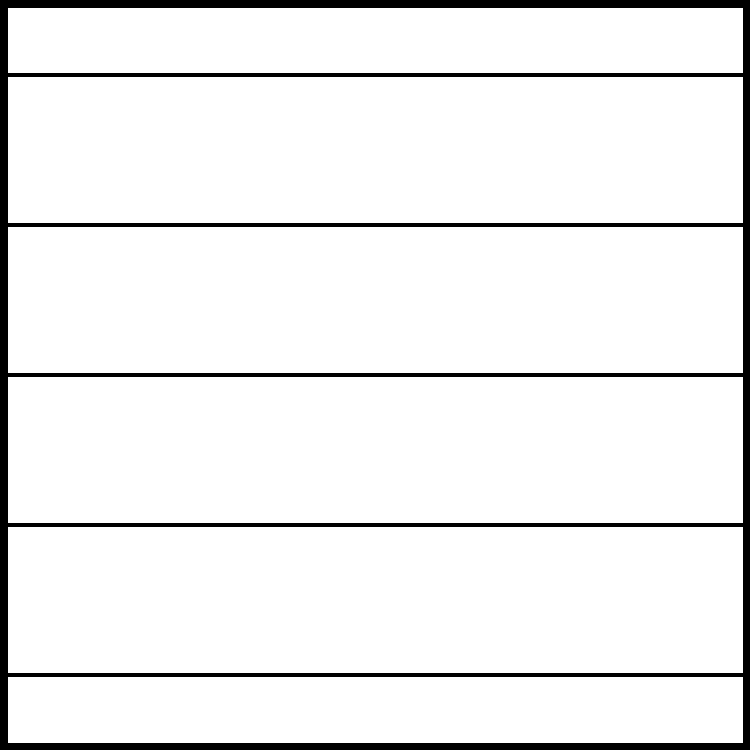}};
\node at (2.5,0) {\includegraphics[width=0.18\textwidth]{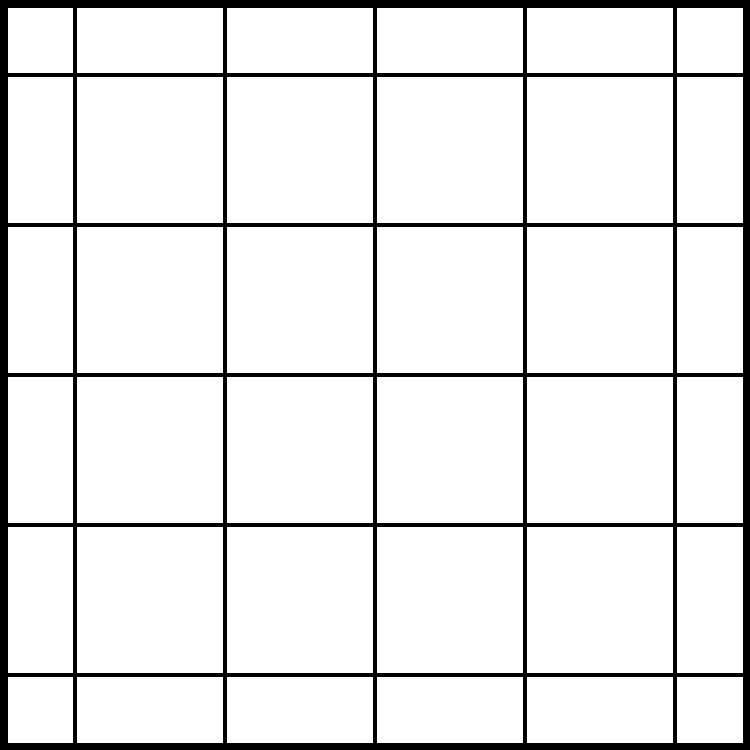}};
\node at (5,0) {\includegraphics[width=0.18\textwidth]{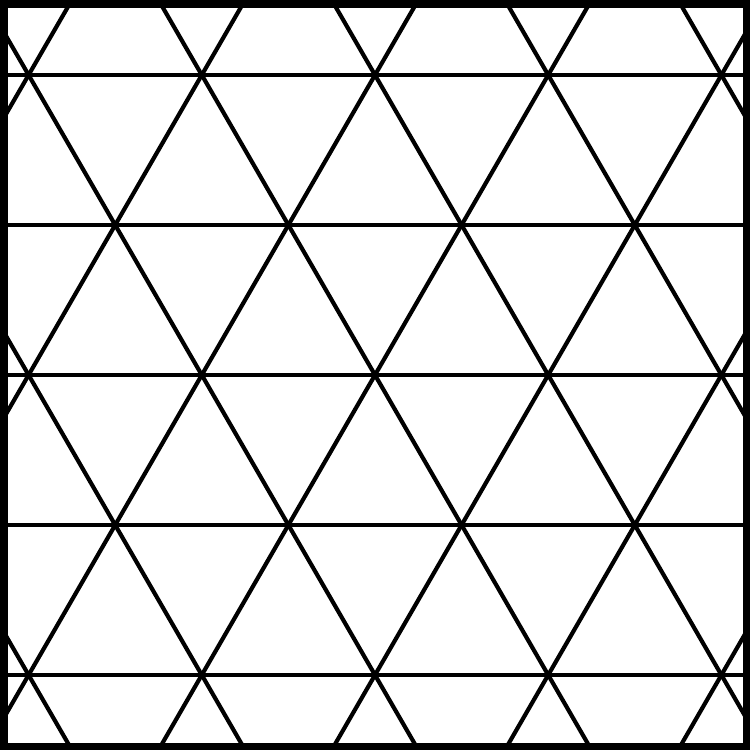}};
\node at (7.5,0) {\includegraphics[width=0.18\textwidth]{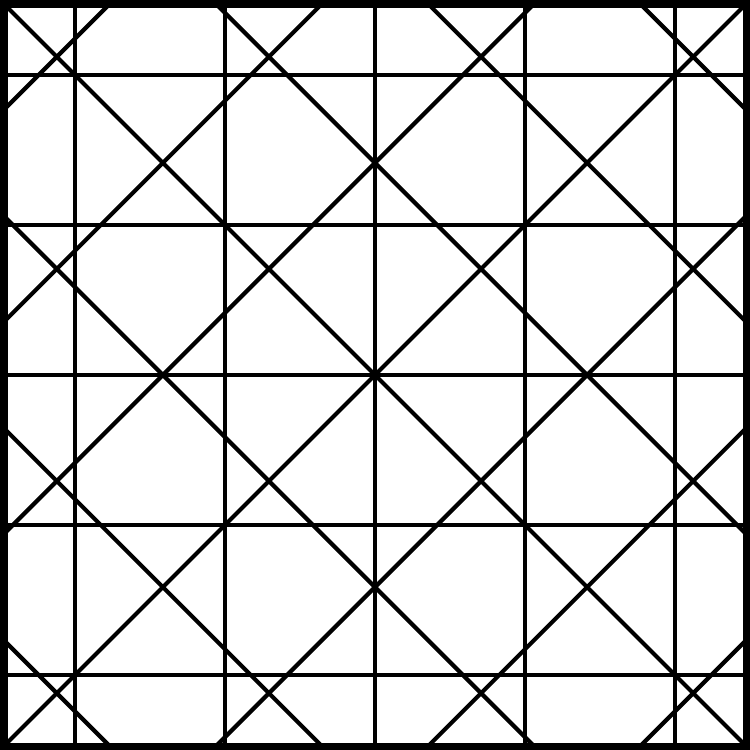}};
\node at (10,0) {\includegraphics[width=0.18\textwidth]{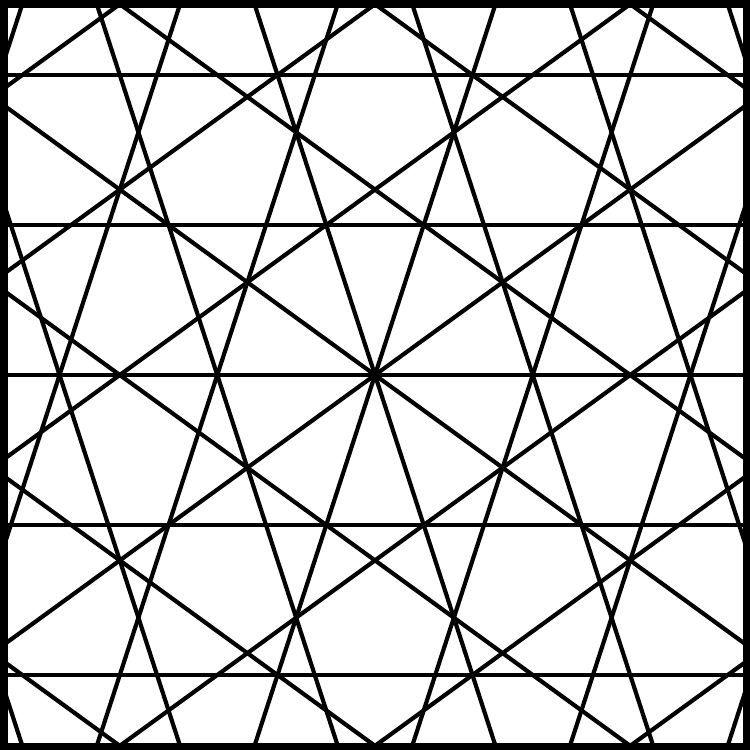}};
\end{tikzpicture}
\vspace{-10pt}
\caption{The sets $S_{n,1/5}$ for $1 \leq n \leq 5$.}
\label{fig:ssets}
\end{figure}
\end{center}
\vspace{-20pt}
We will now argue that this generalized Steinhaus longimeter provides a universal bound for our problem: as $L \rightarrow \infty$, the set $S_{n, \varepsilon} \cap \Omega$, for suitable $n, \varepsilon$, is a set with small Buffon discrepancy.

\begin{theorem} Let $\Omega \subset \mathbb{R}^2$ be a bounded, convex domain.  Then there exists $c_{\Omega}$ such that, as $L \rightarrow \infty$, the set
$ S = \Omega \cap S_{L^{1/3}, L^{-2/3}} $
has length $\mathcal{H}^1(S) \sim L$ and
$$ \left\|  \# (\ell \cap S) -  \frac{2}{\pi}\frac{L}{\emph{area}(\Omega)} \mathcal{H}^1(\ell \cap \Omega) \right\|_{L^{\infty}(\mu)}  \leq c_{\Omega} \cdot L^{1/3}.$$
\end{theorem}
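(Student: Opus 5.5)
We set $n = L^{1/3}$ (rounded to an integer) and $\varepsilon = L^{-2/3}$, and compare $\#(\ell \cap S)$ with $\frac{2}{\pi}\frac{L}{\mathrm{area}(\Omega)}\mathcal{H}^1(\ell\cap\Omega)$ in two stages: first for each family of parallel lines, then summing over the $n$ directions. Fix a line $\ell$ that is not parallel to any direction $\pi k/n$; the excluded lines form a set of kinematic measure zero. Write $h = \mathcal{H}^1(\ell \cap \Omega)$ and let $\theta$ be the angle of $\ell$. The $k$-th family consists of parallel lines spaced $\varepsilon$ apart. The segment $\ell\cap\Omega$, projected onto the normal direction of that family, has length $h|\sin(\theta - \pi k/n)|$. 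The number of lines of the family it meets is therefore
$$ \frac{h\,|\sin(\theta-\pi k/n)|}{\varepsilon} + O(1),$$
with an error of at most $1$ per family, because it counts points of a lattice $\varepsilon\mathbb{Z}$ in an interval. Summing over $k$ gives
$$\#(\ell\cap S) = \frac{h}{\varepsilon}\sum_{k=0}^{n-1}|\sin(\theta-\pi k/n)| + O(n).$$

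The second step is a Riemann-sum estimate. The function $\phi\mapsto|\sin\phi|$ is $\pi$-periodic and Lipschitz. Its equispaced sum therefore satisfies
$$\sum_{k=0}^{n-1}|\sin(\theta - \pi k/n)| = \frac{n}{\pi}\int_0^\pi |\sin\phi|\,d\phi + O(1) = \frac{2n}{\pi} + O(1),$$
uniformly in $\theta$. A sharper bound $O(1/n)$ holds from the Fourier coefficients of $|\sin|$, but $O(1)$ suffices here. Hence
$$\#(\ell\cap S) = \frac{2}{\pi}\frac{n}{\varepsilon}\,h + O\!\left(\frac{h}{\varepsilon}\right) + O(n).$$
With the sharper bound the middle term is $O(h/(n\varepsilon))$. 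Since $h\le\mathrm{diam}(\Omega)$, $n/\varepsilon = L$ and $1/(n\varepsilon)=L^{1/3}$, both error terms are $O_\Omega(L^{1/3})$. The sharper estimate is therefore the one I will actually use, so the Riemann-sum error is taken at order $1/n$.

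The main remaining issue is normalization: the leading coefficient must be $\frac{2}{\pi}\frac{\mathcal{H}^1(S)}{\mathrm{area}(\Omega)}$ and not merely $\frac{2}{\pi}\frac{n}{\varepsilon}$. I compute $\mathcal{H}^1(S)$ family by family. The total length of the $k$-th family inside $\Omega$ equals $\varepsilon^{-1}\mathrm{area}(\Omega)$ plus a Riemann-sum error. By Fubini, $\mathrm{area}(\Omega)=\int(\text{chord length})\,ds$, and the chord length as a function of the offset $s$ is concave on its support, hence of bounded variation, with variation $O(\mathrm{diam}\,\Omega)$. So the error is $O(\mathrm{diam}(\Omega))$ per family and
$$\mathcal{H}^1(S) = \frac{n}{\varepsilon}\mathrm{area}(\Omega) + O(n) = L\,\mathrm{area}(\Omega) + O(L^{1/3}).$$
This shows $\mathcal{H}^1(S)\sim L$ after rescaling, or we simply replace $\varepsilon$ by $\mathrm{area}(\Omega)L^{-2/3}$ so that $\mathcal{H}^1(S)\sim L$ exactly in the stated sense.

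Substituting the true length $L' = \mathcal{H}^1(S)$ then changes the main term by
$$\frac{2}{\pi}\frac{|L'-L\,\mathrm{area}(\Omega)|}{\mathrm{area}(\Omega)}\,h = O_\Omega(L^{1/3}).$$
Collecting the three error sources gives the bound $c_\Omega L^{1/3}$: lattice counting contributes $O(n)$, the angular Riemann sum contributes $O(h/(n\varepsilon))$, and normalization contributes $O(n)$. The choice $n=L^{1/3}$ is exactly what balances $n$ against $1/(n\varepsilon)=L/n^2$.

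The step I expect to be most delicate is the uniform $O(1/n)$ bound for the angular Riemann sum of $|\sin|$, because $|\sin|$ has a kink. I would handle it through the Fourier expansion $|\sin\phi| = \frac{2}{\pi} - \frac{4}{\pi}\sum_{m\ge1}\frac{\cos 2m\phi}{4m^2-1}$. Summing over the $n$ equispaced angles kills every frequency except the multiples of $n$. The surviving terms contribute $\sum_{j\ge1} n\cdot O\!\left((jn)^{-2}\right) = O(1/n)$, uniformly in $\theta$.
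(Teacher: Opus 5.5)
Your proof is correct and is essentially the paper's argument: per-direction lattice counting with $O(1)$ error, the Fourier series of $|\sin|$ in which only frequencies that are multiples of $n$ survive (giving $\frac{2n}{\pi}+O(1/n)$ uniformly in $\theta$), and balancing $n$ against $1/(n\varepsilon)=L/n^2$. Your explicit estimate $\mathcal{H}^1(S)=\frac{n}{\varepsilon}\,\mathrm{area}(\Omega)+O(n)$, obtained from the bounded variation of the concave chord-length function, fills in a normalization step the paper only asserts as ``length $\sim n/\varepsilon$'' (its Cauchy--Crofton Proposition, applied with $c=\frac{2n}{\pi\varepsilon}$ and $X=O(L^{1/3})$, would also supply it). Two small fixes: delete the remark that the crude $O(1)$ Riemann-sum bound ``suffices'', since it only gives $O(h/\varepsilon)=O(L^{2/3})$ and the $O(1/n)$ Fourier bound is essential; and if you round $n$ to an integer, compare with $n/\varepsilon$ rather than $L$ throughout, because $n/\varepsilon-L$ can then be of order $L^{2/3}$.
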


 It is not difficult to see that this result is sharp: the set $\Omega$ could be oriented so that the origin $(0,0) \in \partial \Omega$.  However, the Steinhaus set $S_{n, \varepsilon}$ has the property that $n$ lines meet at the origin, therefore this bound is the best one can hope for when $n \sim L^{1/3}$. One might naturally wonder whether it is possible to avoid this worst case scenario by suitably moving and rotating the set $\Omega$, perhaps an additional averaging procedure can lead to a further improvement.

 \subsection{Open problems.} These two results, a uniformly bounded Buffon discrepancy for the unit disk and a bound of $L^{1/3}$ for general convex sets, naturally suggest a number of different questions. We only list some of the more obvious ones.
 
 \begin{enumerate}
 \item Is it always possible to find a set with Buffon discrepancy of order $\sim 1$ for any convex domain?  If so, is there a simple construction?  If not, what is the best Buffon discrepancy one can hope for?
 \item Is it possible to find other explicit examples with Buffon discrepancy of order $\sim 1$ for the unit disk?  Fig.\ref{fig:1} and Fig. \ref{fig:disk} show numerically obtained examples suggesting their existence. 
 \item What happens if we were to restrict ourselves to only working with sets $S$ that are a union of intersections of lines with $\Omega$? This would include all Steinhaus sets but would eliminate examples like Fig. \ref{fig:disk}. Would this fundamentally change the problem? 
 \item What about the analogous problem in higher dimensions? Note that, for example in $\mathbb{R}^3$, there are at least two separate problems depending on whether `lines' are understood to be lines or sets of co-dimension 1.
 \end{enumerate}
 
\begin{center}
\begin{figure}[h!]
\begin{tikzpicture}
\node at (4,-4) {\includegraphics[width=\textwidth]{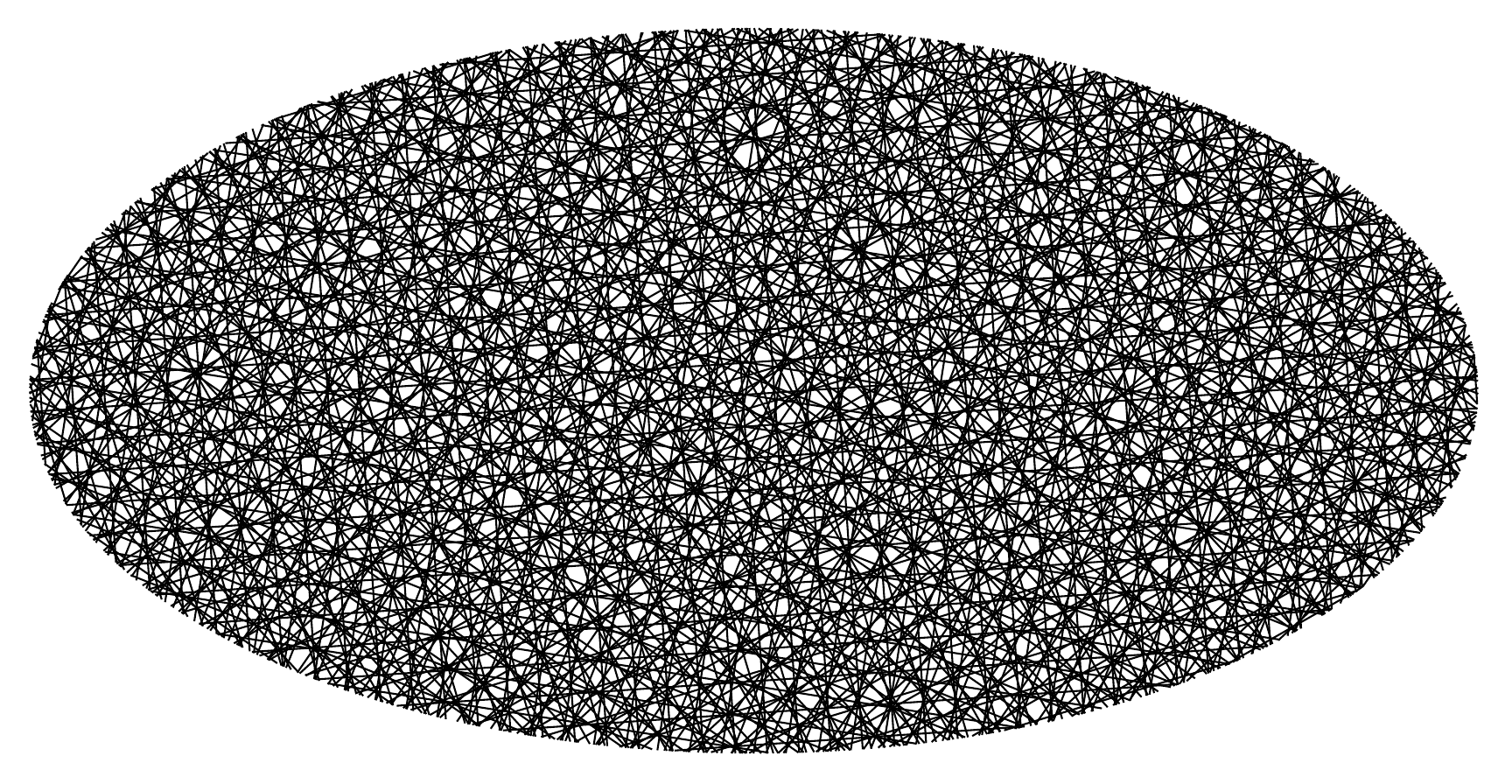}};
\end{tikzpicture}
\vspace{-25pt}
\caption{Construction in the style of the Steinhaus longimeter.}
\label{fig:ellipse}
\end{figure}
\end{center}
\vspace{-10pt}
It appears as if Steinhaus sets are a fairly natural starting point when it comes to the construction of optimal sets; there are several natural variations one might want to consider, for example, we defined them to all meet in the origin but one could add an offset to each individual line to avoid this cluster.  A second interesting aspect is that the Steinhaus sets $S_{n, \varepsilon}$, especially for $n$ large and $\varepsilon$ small, have a fairly intriguing micro-structure (see, Fig. \ref{fig:s50} and Fig. \ref{fig:s51}). There are regions that appear fairly random, there are obvious clusters where many lines meet and there are some relatively empty regions. Theorem 2 suggests that $S_{n, 1/n^2}$ for $n \in \mathbb{N}$ might be a particularly interesting subfamily.
\vspace{-8pt}
\begin{center}
\begin{figure}[h!]
\begin{tikzpicture}
\node at (4,-4) {\includegraphics[width=\textwidth]{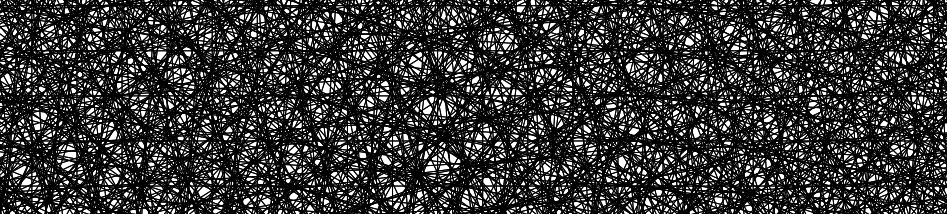}};
\end{tikzpicture}
\vspace{-20pt}
\caption{Microstructures in $S_{50, 1/2500}$.}
\label{fig:s50}
\end{figure}
\end{center}
\vspace{-5pt}

\subsection{Related results.} We are not aware of any directly related results in integral geometry \cite{santalo}.  Our problem can be understood in the spirit of discrepancy theory, see Beck-Chen \cite{beckchen}, Chazelle \cite{chaz}, Drmota-Tichy \cite{drmota}, Kuipers-Niederreiter \cite{kuipers} and Matousek \cite{matousek}. Indeed, it could be interpreted as the problem finding a suitable discretization of kinematic measure restricted to a domain, however, we know of no work in that direction. The Steinhaus longimeter has had profound impact in \textit{stereology}, the study of how to extract information from lower-dimensional sets, we refer to \cite{badd, ster00, coer, ster0, cruz, ster1, gomez, kid, kulk}. A related result is due to Liu-Zhang-Zheng-Paul \cite{liu}: they use a low-discrepancy sequence on the sphere to create `uniformly distributed' lines, though, in their setup the goal is surface estimation of an a priori unknown object. This has since been further pursued \cite{kot, neeman}, however, it is really a different problem. We also note the existence of the work of Ambartzumian \cite{amb1, amb2} which, again, seems to be of a very different flavor.

\section{Proofs}
\subsection{Proof of the Proposition.}
\begin{proof}
Since $\Omega$ is bounded, we may assume without loss of generality that it is contained in a ball of radius $R$ centered at the origin, i.e. $\Omega \subset B(0,R)$. We will now consider the set of all lines $\ell$ that intersect this ball which we can identify with the interval $[-R, R] \times \mathbb{S}^1$ equipped with the usual product measure: this identifies each line with the angle that it makes with the $x-$axis (or any other fixed reference direction) as well as the closest (signed) distance it has with the origin. By choosing the product measure, the arising measure is invariant under rotation and translation. This set has total measure $2R \times 2\pi = 4 R \pi$. Since the set $S$ is rectifiable, we may think of it as a union of finitely many line segments; we first argue that linearity gives that
$$ \int_{-R}^{R} \int_{0}^{2\pi} \# (\ell \cap S) dx d\theta = 4 L.$$
This can be seen as follows: if $S$ is a line segment of length $\varepsilon$, then the size of its projection depends on the angle $\alpha$ between the projection direction and the orientation of the line segment and evaluates to $|\cos{\alpha}| \varepsilon$. In that case, 
$$ \int_{-R}^{R} \int_{0}^{2\pi} \# (\ell \cap S) dx d\theta =  \int_{0}^{2\pi}  \int_{-R}^{R} \# (\ell \cap S) dx d\theta = \int_{0}^{2\pi} |\cos{\alpha}| \varepsilon d\alpha = 4 \varepsilon.$$
Since the integral is additive, we arrive at the desired conclusion.
Simultaneously, by Fubini, we have
\begin{align*} 
\int_{-R}^{R} \int_{0}^{2\pi} \mathcal{H}^1(\ell \cap \Omega) dx d\theta &= \int_{0}^{2\pi} \int_{-R}^{R} \mathcal{H}^1(\ell \cap \Omega) dx d\theta \\
&= \int_{0}^{2\pi} \mbox{area}(\Omega) d\theta = 2\pi \cdot  \mbox{area}(\Omega).
\end{align*}
If it is now true that for some $c, X > 0$ and all lines $\ell$ we have that
$$ \left| \# (\ell \cap S) - c \cdot \mathcal{H}^1( \ell \cap \Omega) \right| \leq X,$$
then the triangle inequality implies
\begin{align*}
\left| 4L - c \cdot 2\pi \cdot \mbox{area}(\Omega) \right| &= \left| \int_{-R}^{R} \int_{0}^{2\pi} \# (\ell \cap S)  - c \cdot \mathcal{H}^1( \ell \cap \Omega)  dx d\theta \right| \\
&\leq \int_{-R}^{R} \int_{0}^{2\pi}   \left|  \# (\ell \cap S)  - c \cdot \mathcal{H}^1( \ell \cap \Omega)  \right| dx d\theta \leq 4 \pi R X.
\end{align*}
Dividing both sides by $2 \pi \mbox{area}(\Omega)$ now leads to
$$ \left| c - \frac{2}{\pi}\frac{L}{\mbox{area}(\Omega)} \right| \leq \frac{2 R }{\mbox{area}(\Omega)} X.$$
Using $R \leq \mbox{diam}(\Omega)$ implies the result.
\end{proof}

\begin{center}
\begin{figure}[h!]
\begin{tikzpicture}
\node at (0,0) {\includegraphics[width=0.2\textwidth]{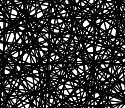}};
\node at (3,0) {\includegraphics[width=0.21\textwidth]{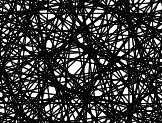}};
\node at (6,0) {\includegraphics[width=0.2\textwidth]{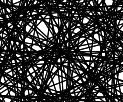}};
\node at (0,-2.4) {\includegraphics[width=0.2\textwidth]{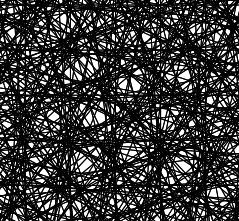}};
\node at (4.46,-2.4) {\includegraphics[width=0.445\textwidth]{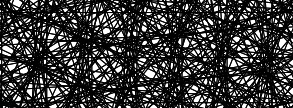}};
\end{tikzpicture}
\vspace{-10pt}
\caption{Pictures at an exhibition: $S_{50, 1/2500}$ at small scale.}
\label{fig:s51}
\end{figure}
\end{center}

\subsection{Proof of Theorem 1}
\begin{proof} 
We consider a union of circles centered at the origin.  More precisely, given radii $0 < r_1 < r_2 < \dots < r_k \leq 1$, we consider the set
$$ S = \bigcup_{i=1}^{k} \left\{x \in \mathbb{R}^2: \|x\| = r_i \right\}.$$
The length of this set is supposed to be $L$, this requires
$$ r_1 + r_2 + \dots + r_k = \frac{L}{2\pi}.$$
For any arbitrary line $\ell$, we consider its closest approach to the origin, i.e. $d(\ell) = \min_{x \in \ell} \|x\|$. Then
$$ \mathcal{H}^1(\ell \cap \mathbb{D}) = 2 \sqrt{1 - d(\ell)^2}.$$
Simultaneously, we have that, except for a set of kinematic measure 0
$$ \# (S \cap \ell) =  2 \cdot \# \left\{1 \leq i \leq k: d(\ell) \leq r_i \right\} .$$
The exceptional set of lines is exactly the set of lines that is tangent to one of the $k$ circles and this tells us that for \textit{all} lines
$$ \left| \# (S \cap \ell) -  2 \cdot \# \left\{1 \leq i \leq k: d(\ell) \leq r_i \right\} \right| \leq 1.$$
 We arrive at the bound
\begin{align*}
\left| \# (\ell \cap S) - \frac{2L}{\pi^2} \mathcal{H}^1(\ell \cap \mathbb{D}) \right| &\leq 1 + \left|  2 \cdot \# \left\{1 \leq i \leq k: d(\ell) \leq r_i \right\}   - \frac{2L}{\pi^2} \mathcal{H}^1(\ell \cap \mathbb{D}) \right| \\
&\leq 1 + 2 \left|  \# \left\{1 \leq i \leq k: d(\ell) \leq r_i \right\}   - \frac{2L}{\pi^2} \sqrt{1 - d(\ell)^2}\right|.
\end{align*}
The line $\ell$ has been reduced to a number $0 < d(\ell) < 1$ and it remains to find a suitable $k \in \mathbb{N}$ and a suitable set of $k$ radii $r_1 < r_2 < \dots < r_k$ subject
to the boundary conditions indicated above such that
$$ \max_{0 < r < 1} \left|  \# \left\{1 \leq i \leq k:  r_i \geq r \right\}   - \frac{2L}{\pi^2} \sqrt{1 - r^2}\right| \qquad \mbox{is small}.$$
This is merely the problem of discretizing a non-uniform density. More precisely, since $\sqrt{1-r^2}$ vanishes when $r=1$, there is no point in having anything at radius exactly 1. If one imagines starting at $r=1$ and slowly decreasing the value of $r$, we see that $(2L/\pi^2) \sqrt{1-r^2}$ is monotonically increasing. If one wanted to keep the error uniformly bounded, it might make sense to place a radius every time the value attains a new integer value, i.e. solving for
$$ \frac{2L}{\pi^2} \sqrt{1-r_i^2} = i$$
which suggests setting
$$ r_i = \sqrt{1 - \frac{i^2 \pi^4}{4 L^2}} \qquad \mbox{for} \qquad 1 \leq i < \frac{2L}{\pi^2}.$$
Then
$$  \sqrt{1 - \frac{i^2 \pi^4}{ 4L^2}} \geq r \qquad \mbox{is equivalent to} \qquad  \frac{2L}{\pi^2} \sqrt{1 - r^2} \geq i$$
which shows that the maximal error term is bounded from above by 1, by construction.  Of course, in doing so, there is no a priori guarantee that the length of the set is exactly $L$ and there is no reason why it would be. The total length is 
\begin{align*}
\mathcal{H}^1(S) = 2\pi  \sum_{i=1}^{\left\lfloor  2L /\pi^2 \right\rfloor}  \sqrt{1 - \frac{i^2 \pi^4}{4L^2}}.
\end{align*}
We approximate the sum by an integral (which leads to a controlled error because the integrand is in $[0,1]$ and monotonically decreasing) and arrive at the inequality
$$ \left| \mathcal{H}^1(S) -  2\pi \int_{0}^{2L/\pi^2} \sqrt{1 - \frac{\pi^4 }{4L^2} x^2} dx \right| \leq 8\pi.$$
The area of a quarter disk is merely
$$  \int_{0}^{2L/\pi^2} \sqrt{1 - \frac{\pi^4 }{4L^2} x^2} dx = \frac{L}{2\pi}$$
and we arrive at
$$ \left| \mathcal{H}^1(S) -  L \right| \leq 8\pi.$$
We can now simply add or remove the remaining length by adding/removing at most $\leq 20$ circles of radius 1 leading to a uniformly bounded error.
\end{proof}

\subsection{Proof of Theorem 2}
\begin{proof} Let $x,y \in \mathbb{R}^2$ be two distinct points in the plane.
 It is our goal to count the number of intersections between the line segment from $x$ to $y$ and the family of lines that belong to the Steinhaus set $S_{n, \varepsilon}$ which is given by
$$ \begin{pmatrix} \cos\left( \pi k/n \right) \\ \sin\left(  \pi k/n\right) \end{pmatrix} t + \begin{pmatrix} - \sin\left( \pi k/n \right) \\ \cos\left(  \pi k/n\right) \end{pmatrix} s \varepsilon  \qquad  \quad t \in \mathbb{R},~ s \in \mathbb{Z}, ~0 \leq k < n.$$
Our goal is to obtain uniform estimates which implies that the precise structure of the set $\Omega$ does not play a role: convexity enters implicitly in the sense that any line going through $\Omega$ will enter at a point $x \in \partial \Omega$ and leave the set $\Omega$ in another point $y \in \partial \Omega$ except for tangent lines in which case $x=y$ which is also covered by the subsequent argument.
\vspace{-10pt}
\begin{center}
\begin{figure}[h!]
\begin{tikzpicture}
\node at (0,0) {\includegraphics[width=0.4\textwidth]{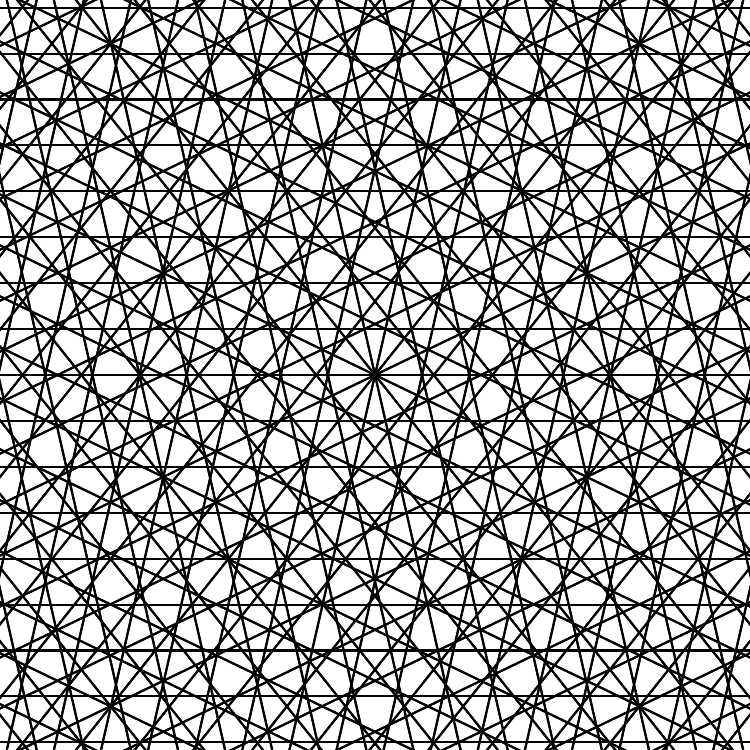}};
\filldraw[red] (-0.9,-1.2) circle (0.1cm);
\filldraw[red] (0.7,1.6) circle (0.1cm);
\draw [very thick, red] (-0.9, -1.2) -- (0.7, 1.6);
\filldraw[red] (-0.9,-1.2) circle (0.1cm);
\filldraw[red] (0.7,1.6) circle (0.1cm);
\filldraw[black] (7-0.9,-1.2) circle (0.1cm);
\filldraw[black] (7.7,1.6) circle (0.1cm);
\draw [very thick, black] (7-0.9, -1.2) -- (7+0.7, 1.6);
\foreach \i in {-16, -15, ..., 17} {
    \draw (5, 0.4 + \i*0.1) -- (8, \i*0.1);
}
\end{tikzpicture}
\vspace{-5pt}
\caption{Counting the number of intersections for a full Steinhaus set (left) and for the translates of a fixed line (right).}
\label{fig:break}
\end{figure}
\end{center}
\vspace{-10pt}
We proceed by fixing $0 \leq k < n$ and counting the number of lines with fixed $k$ intersecting the line segment from $x$ to $y$.
A quick inspection, see Fig. \ref{fig:break}, shows that this depends on the projection of the line segment onto the orthogonal direction (up to an error of $\mathcal{O}(1)$ coming from the two endpoints); that number is
$$ \frac{1}{\varepsilon}\left| \left\langle \begin{pmatrix} - \sin\left( \pi k/n \right) \\ \cos\left(  \pi k/n\right) \end{pmatrix}, y - x \right\rangle \right| + \mathcal{O}(1).$$
Summing over $0 \leq k < n$  and writing $y-x = (-a,b)$ for some $a,b \in \mathbb{R}$, we see that the number of intersections is given by
 $$ \# \mbox{intersections} = \frac{1}{\varepsilon}\sum_{k=0}^{n-1} \left| a  \sin\left( \pi k/n \right) + b \cos\left(  \pi k/n\right) \right| + \mathcal{O}(n).$$
 The sum $\Sigma$ can be rewritten as
\begin{align*}
\Sigma &= \sqrt{a^2 + b^2} \sum_{k=0}^{n-1} \left| \frac{a}{\sqrt{a^2 + b^2}}  \sin\left( \pi k/n \right) + \frac{b}{\sqrt{a^2 + b^2}} \cos\left(  \pi k/n\right) \right| \\
&= \sqrt{a^2 + b^2} \sum_{k=0}^{n-1} \left|  \sin\left( \frac{\pi k}{n}  + \theta\right) \right|
\end{align*}
for a suitable $\theta$ depending only on $a,b$. This sum has been extensively studied, see Moran \cite{moran} or Steinhaus \cite{stein00}. Using the Fourier series
$$ |\sin{(x)}| = \frac{2}{\pi} - \frac{4}{\pi} \sum_{m=1}^{\infty} \frac{ \cos{(2mx)}}{4m^2-1}$$
and plugging in, we arrive at
\begin{align*}
\sum_{k=0}^{n-1} \left|  \sin\left( \frac{\pi k}{n}  + \theta\right) \right| &= \sum_{k=0}^{n-1} \left( \frac{2}{\pi} - \frac{4}{\pi} \sum_{m=1}^{\infty} \frac{ \cos{(2m \left(\frac{\pi k}{n}  + \theta \right))}}{4m^2-1} \right) \\
&= \frac{2n}{\pi} - \frac{4}{\pi}  \sum_{m=1}^{\infty} \frac{1}{4m^2-1}  \sum_{k=0}^{n-1} \cos{\left(2m \left(\frac{\pi k}{n}  + \theta \right)\right)}.
 \end{align*}
 The inner sum is 0 unless $m$ is a multiple of $n$. Thus
 $$ \sum_{k=0}^{n-1} \left|  \sin\left( \frac{\pi k}{n}  + \theta\right) \right| = \frac{2n}{\pi} - \frac{4}{\pi} \sum_{\ell=1}^{\infty} \frac{n}{4\ell^2 n^2-1}  \cos(2 \ell n \theta).$$
 One could now argue that this expression is minimized when $\theta = 0$ and maximized when $\theta = \pi/(2n)$ and obtain a precise closed-form expression in terms of the cotangent and the cosecant (which is done by Moran and Steinhaus). We do not require results at that level of precision and simply argue that
 $$ \left| \sum_{\ell=1}^{\infty} \frac{1}{4\ell^2 n^2-1} n \cos(2 \ell n \theta) \right| \leq \sum_{\ell=1}^{\infty} \frac{n}{4\ell^2 n^2-1} \leq \frac{1}{n}.$$
Altogether
\begin{align*}
\# \mbox{intersections} &=  \frac{1}{\varepsilon} \sum_{k=0}^{n-1} \left| a  \sin\left( \pi k/n \right) + b \cos\left(  \pi k/n\right) \right| + \mathcal{O}(n) \\
&=    \frac{1}{\varepsilon} \frac{2n}{\pi} \sqrt{a^2 + b^2}  + \mathcal{O}\left( \frac{\sqrt{a^2 + b^2}}{ \varepsilon n}\right) + \mathcal{O}(n)\\
&=    \frac{1}{\varepsilon} \frac{2n}{\pi} \sqrt{a^2 + b^2}  + \mathcal{O}\left( \frac{ \mbox{diam}(\Omega)}{ \varepsilon n} + n \right)
\end{align*} 
We see that the leading order term is directly proportional to the length $\sqrt{a^2 + b^2}$ which is exactly what we want.  It remains to obtain a uniform estimate for the error. We observe that the Steinhaus set $S_{n, \varepsilon}$ has length $\sim n/\varepsilon$ inside a unit square and thus also for any bounded domain with an implicit constant depending on $\Omega$.
 Then, fixing $n/\varepsilon \sim L$ to be the length of the set, we want to minimize
$$ \frac{1}{n \varepsilon} + n = \frac{L}{n^2} + n \qquad \mbox{leading to} ~n \sim L^{1/3} ~\mbox{and therefore}~\varepsilon \sim L^{2/3}.$$
This choice of parameters then yields the desired result.
\end{proof}

 \textbf{Acknowledgment.}  The author is grateful to Google DeepMind (special thanks to Bogdan Georgiev and Adam Zsolt Wagner) for facilitating the use of \textsc{AlphaEvolve}. It was used as an exploratory tool in the early stages of the manuscript where it was used to search for sets with good properties.

\end{document}